\documentclass[10pt]{article}

\usepackage{amsmath,amscd,amsfonts,amsthm,amssymb}


\newcommand{\shrinkmargins}[1]{
  \addtolength{\textheight}{#1\topmargin}
  \addtolength{\textheight}{#1\topmargin}
  \addtolength{\textwidth}{#1\oddsidemargin}
  \addtolength{\textwidth}{#1\evensidemargin}
  \addtolength{\topmargin}{-#1\topmargin}
  \addtolength{\oddsidemargin}{-#1\oddsidemargin}
  \addtolength{\evensidemargin}{-#1\evensidemargin}
  }

\shrinkmargins{.7}

\DeclareMathOperator{\SL}{SL}

\DeclareMathOperator{\Spec}{Spec}

\DeclareMathOperator{\Gal}{Gal}

\DeclareMathOperator{\Ind}{Ind}
\DeclareMathOperator{\Res}{Res}

\DeclareMathOperator{\rank}{rank}
\DeclareMathOperator{\Aut}{Aut}

\newcommand{\field}[1]{\mathbb{#1}}

\newcommand{\Qp}{\field{Q}_p}
\newcommand{\Q}{\field{Q}}

\newcommand{\Zp}{\field{Z}_p}
\newcommand{\Z}{\field{Z}}
\newcommand{\F}{\field{F}}
\newcommand{\Fp}{\field{F}_p}

\newcommand{\C}{\field{C}}
\renewcommand{\P}{\field{P}}

\newcommand{\XX} {\mathcal{X}}
\newcommand{\ra}{\rightarrow}

\newcommand{\OO}{\mathcal{O}}

\newcommand{\tensor} {\otimes}

\newcommand{\set}[1]{\{#1\}}

\newcommand{\beq}{\begin{displaymath}}
\newcommand{\eeq}{\end{displaymath}}
\newcommand{\beqn}{\begin{equation}}
\newcommand{\eeqn}{\end{equation}}

\theoremstyle{plain}
\newtheorem{thm}{Theorem}[section]
\newtheorem{prop}[thm]{Proposition}

\newtheorem{lem}[thm]{Lemma}
\newtheorem*{intro}{Theorem}

\theoremstyle{definition}

\newtheorem{exmp}[thm]{Example}

\theoremstyle{remark}

\title{Upper bounds for the growth of Mordell-Weil ranks in pro-$p$ towers of Jacobians}

\author{Jordan S. Ellenberg}

\begin{document}

\maketitle

\begin{abstract}
We study the variation of Mordell-Weil ranks in the Jacobians of curves in a pro-$p$ tower over a fixed number field.  In particular, we show that under mild conditions the Mordell-Weil rank of a Jacobian in the tower is bounded above by a constant multiple of its dimension.  In the case of the tower of Fermat curves, we show that the constant can be taken arbitrarily close to $1$.  The main result is used in the forthcoming paper of Guillermo Mantilla-Soler on the Mordell-Weil rank of the modular Jacobian $J(Np^m)$.
\end{abstract}

\section{Introduction}

\label{s:intro}

Let $F$ be a number field, and let $X/F$ be a smooth algebraic curve, not
necessarily proper. Let $p$ be a prime.

By a {\em pro-$p$ tower} of curves over $X$ we mean a diagram
\beq
\ldots \ra X_3/F \ra X_2/F \ra X_1/F \ra X_0/F = X
\eeq
where the map $X_n \ra X$ is an \'{e}tale cover, geometrically Galois with Galois group a finite $p$-group.  Let $T$ be a finite set of primes of $\OO_F$ including $p$ and $\infty$.  We say the tower {\em has good reduction} away from $T$ if the diagram above extends to
\beq
\ldots \ra \XX_3/R \ra \XX_2/R \ra \XX_1/R \ra \XX_0/R
\eeq
where $R = \OO_F[1/T]$, each $\XX_n$ is smooth over $R$, and the maps are again finite \'{e}tale.\footnote{A better definition might be to say that the tower has good reduction away from $T$ if $X/\Spec \OO_F[1/T]$ is the complement in a proper smooth curve $\tilde{X} / \Spec \OO_F[1/T]$ of a divisor $D$ which is also proper and smooth over $\Spec \OO_F[1/T]$; but the definition used here avoids technical issues orthogonal to the theme of the paper.}  Write $G_F$ for the absolute Galois group of $F$ and $G_T(F)$ for the Galois group of the maximal extension of $F$ unramified outside $T$.

The main theorem of this paper is that the Mordell-Weil ranks of the Jacobians of the curves in a tower grow at most {\em linearly} in the genus.

\begin{intro}[Theorem \ref{th:main}] Write $\tilde{X}_n/F$ for the smooth proper curve over $F$ containing $X_n$ as open subscheme.  Suppose $G_F$ acts trivially on $H_1(X_{\bar{F}},\Z/ p\Z)$. Then  for any $n$ and any subquotient $A$ of the Jacobian of $\tilde{X}_n$ we have
\beq
\rank A(F) \leq 2 \dim_{\F_p} H^1(G_T(F), \F_p) \dim A.
\eeq
\end{intro}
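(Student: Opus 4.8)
The plan is to bound $\rank A(F)$ by the $\Fp$-dimension of a Selmer group, and then to estimate that dimension by showing that $A[p]$ is, as a $G_F$-module, an iterated extension of copies of the trivial module $\Fp$.

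\emph{Reduction to a cohomological estimate.} Since $A$ is a subquotient of $\mathrm{Jac}(\tilde X_n)$, the Galois module $A[p]$ is a subquotient of $\mathrm{Jac}(\tilde X_n)[p]$, which is itself a $G_F$-equivariant quotient of $H_1(X_{n,\bar F},\Fp)$: the open immersion $X_n\hookrightarrow\tilde X_n$ induces a surjection on first homology, and $H_1(\tilde X_{n,\bar F},\Fp)\cong\mathrm{Jac}(\tilde X_n)[p]$ canonically (étale homology versus $p$-torsion, with no Tate twist). Because $\XX_n$ has good reduction away from $T$, the representation $H_1(X_{n,\bar F},\Fp)$, and hence $A[p]$, is unramified outside $T$; by N\'eron--Ogg--Shafarevich $A$ then has good reduction outside $T$, so the local conditions defining $\mathrm{Sel}_p(A/F)$ are the unramified ones at every $v\notin T$. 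Thus $\mathrm{Sel}_p(A/F)$ lands in the subgroup of $H^1(G_F,A[p])$ of classes unramified outside $T$, which (as $A[p]$ is a $G_T(F)$-module) is precisely the image of $H^1(G_T(F),A[p])$. Together with $A(F)/pA(F)\hookrightarrow\mathrm{Sel}_p(A/F)$ this gives $\rank A(F)\le\dim_{\Fp}\mathrm{Sel}_p(A/F)\le\dim_{\Fp}H^1(G_T(F),A[p])$.

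\emph{The Galois action on $A[p]$ factors through a finite $p$-group.} Let $P=\pi_1(X_{\bar F})^{(p)}$ be the maximal pro-$p$ quotient of the geometric fundamental group, $N$ the kernel of $\pi_1(X_{\bar F})\twoheadrightarrow P$, and $Q=\pi_1(X)/N$, so $1\to P\to Q\to G_F\to 1$. Conjugation makes $Q$ act on $P$, and the induced action on the Frattini quotient $P/\Phi(P)$ is trivial: $P$ acts trivially because $P/\Phi(P)$ is abelian, and $G_F$ acts trivially because $P/\Phi(P)=H_1(X_{\bar F},\Fp)$, which is our hypothesis. Hence $Q\to\Aut(P)$ has image inside the group $\Aut_\Phi(P)$ of automorphisms acting trivially on $P/\Phi(P)$, and this group is pro-$p$ since $P$ is topologically finitely generated (the classical fact that the kernel of $\Aut(G)\to\Aut(G/\Phi(G))$ is a $p$-group for a finite $p$-group $G$, passed to the inverse limit). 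Fix $n$. Because the tower is pro-$p$, the image $U_n$ of $\pi_1(X_{n,\bar F})$ in $P$ is an open normal subgroup and $H_1(X_{n,\bar F},\Fp)=U_n/\Phi(U_n)$. Put $Q_n=\pi_1(X_n)/N\le Q$; then $U_n=\ker(Q_n\to G_F)\trianglelefteq Q_n$, conjugation gives $Q_n\to\Aut(U_n/\Phi(U_n))$, the subgroup $U_n$ acts trivially there (inner automorphisms), so the action factors through $Q_n/U_n=G_F$ and realizes the $G_F$-action on $H_1(X_{n,\bar F},\Fp)$. Since $Q_n\to\Aut(P)$ has pro-$p$ image, so does $Q_n\to\Aut(U_n/\Phi(U_n))$, and being a subgroup of the finite group $\GL(U_n/\Phi(U_n))$ it is a finite $p$-group. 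Passing to the quotient $\mathrm{Jac}(\tilde X_n)[p]$ and then to the subquotient $A[p]$, the image of $G_F$ in $\Aut(A[p])$ is a finite $p$-group; in particular it is unipotent, so $A[p]$ has a $G_F$-stable filtration whose graded pieces are all isomorphic to the trivial module $\Fp$.

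\emph{Conclusion.} The long exact cohomology sequences give $\dim_{\Fp}H^1(G_T(F),M)\le\dim_{\Fp}H^1(G_T(F),M')+\dim_{\Fp}H^1(G_T(F),M'')$ for every short exact sequence $0\to M'\to M\to M''\to 0$, so the filtration of $A[p]$ yields $\dim_{\Fp}H^1(G_T(F),A[p])\le(\dim_{\Fp}A[p])\cdot\dim_{\Fp}H^1(G_T(F),\Fp)=2\dim A\cdot\dim_{\Fp}H^1(G_T(F),\Fp)$, and combining with the first step gives the asserted bound. The main obstacle is the middle step: one must set up the diagram of arithmetic fundamental groups carefully enough that the $G_F$-action on $H_1(X_{n,\bar F},\Fp)$ is visibly a conjugation action inside $Q$, so that it inherits the pro-$p$ property of $\Aut_\Phi(P)$; after that everything is formal. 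One should also pin down the identification $H_1(\tilde X_{n,\bar F},\Fp)\cong\mathrm{Jac}(\tilde X_n)[p]$ and the technical point, flagged in the footnote, that good reduction of the tower forces $A$ to have good reduction outside $T$.
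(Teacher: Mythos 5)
Your proposal is correct and follows essentially the same route as the paper: the descent bound $\rank A(F)\le\dim_{\Fp}H^1(G_T(F),A[p])$, the identification of $A[p]$ as a subquotient of $H_1(X_{n,\bar F},\Fp)$, unipotence of the $G_F$-action there via the pro-$p$ kernel of $\Aut(\pi)\to\Aut(\pi^{ab}/p\pi^{ab})$, and then the filtration together with the long exact sequence. If anything, your middle step is more careful than the paper's Lemma~\ref{le:unipotent}, which invokes the Frattini-quotient fact for the full profinite $\pi$ rather than first passing to its maximal pro-$p$ quotient as you do.
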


The upper bound for $\rank A(F) / \dim A$ in Theorem~\ref{th:main} depends on $X$ and $p$.  In the special case of Fermat curves over $\Q$, we can do better.

\begin{intro}[Theorem \ref{th:fermatrank}, special case $K=\Q$]  Let $p$ be an odd prime, and let $J_n$ be the Jacobian of the plane curve with equation
\beq
x^{p^n} + y^{p^n} + z^{p^n} = 0
\eeq
Then there is a constant $C'$, depending only on $p$, such that 
\beq
\rank J_n(\Q) \leq \dim J_n + C'p^n.
\eeq
\end{intro}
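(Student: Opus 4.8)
The plan is to realise the Fermat curves as a pro-$p$ tower, decompose each Jacobian up to isogeny into pieces with complex multiplication, descend those pieces along the cyclotomic tower, and then run the Galois-cohomological estimate underlying Theorem~\ref{th:main} over the matching cyclotomic field. The point is that a CM factor of $J_n$ of ``exact level'' $m$ is an abelian variety over $\Q$ of dimension $\phi(p^m)/2$ carrying an action of $\Z[\zeta_{p^m}]$ defined over $\Q(\zeta_{p^m})$, and $\Q(\zeta_{p^m})$ is a totally complex field with exactly $\phi(p^m)/2$ complex places; since $\dim_{\Fp}H^1(G_T(F),\Fp)$ equals the number of complex places of $F$ up to an error $\delta_F$, averaging the linear bound over $\Gal(\Q(\zeta_{p^m})/\Q)$ produces a bound over $\Q$ whose leading term is exactly $\dim J_n$, plus the accumulated $\delta_F$'s.

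\textbf{The tower and its CM decomposition.} Let $C_n$ be the smooth proper Fermat curve of degree $p^n$, so $J_n=\mathrm{Jac}(C_n)$, and $C_n^{\circ}=C_n\smallsetminus\{xyz=0\}$. The maps $C_n^{\circ}\to C_m^{\circ}$, $[x:y:z]\mapsto[x^{p^{n-m}}:y^{p^{n-m}}:z^{p^{n-m}}]$, are finite \'etale, geometrically Galois with group $(\Z/p^{n-m})^2$, and extend to smooth curves over $\Z[1/p]$; thus $(C_n^{\circ})$ is a pro-$p$ tower of good reduction away from $\{p,\infty\}$ with $\tilde X_n=C_n$. The deck transformations $[x:y:z]\mapsto[\zeta_{p^n}^ax:\zeta_{p^n}^by:z]$ let $\Z[\mu_{p^n}\times\mu_{p^n}]$ act on $J_n$, and decomposing $H^1$ into eigenspaces yields a $\Q$-isogeny factorisation $J_n\sim\prod_{\mathcal O}A_{\mathcal O}$ indexed by the orbits $\mathcal O$ of $\{(a,b):a,b,a+b\not\equiv0\ (p^n)\}$ under $(a,b)\mapsto(ta,tb)$, $t\in(\Z/p^n)^{\times}$. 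An orbit of exact level $m$ (first occurring in $J_m$) has size $\phi(p^m)$, there are $p^{m-1}(p+1)-3$ of them, and the corresponding $A_{\mathcal O}$ is an abelian variety over $\Q$ with $\dim A_{\mathcal O}=\phi(p^m)/2$ carrying a faithful $\Z[\zeta_{p^m}]$-action $\iota$, defined over $F:=\Q(\zeta_{p^m})$, induced by the deck transformations and hence satisfying $\sigma\circ\iota(\alpha)\circ\sigma^{-1}=\iota(\sigma\alpha)$ for $\sigma\in\Gal(F/\Q)$. In particular $\sum_{\mathcal O}\dim A_{\mathcal O}=\dim J_n$ and $\#\{\mathcal O\}=O(p^n)$.

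\textbf{Descent and the estimate for one factor.} Fix $\mathcal O$ of exact level $m$ and $F=\Q(\zeta_{p^m})$. The Galois-equivariance of $\iota$ makes $A_{\mathcal O}(F)\otimes\Q$ a module over the crossed product $F\ast\Gal(F/\Q)$, which (as $F/\Q$ is Galois) is a matrix algebra $M_{\phi(p^m)}(\Q)$; such a module is free, so $\rank A_{\mathcal O}(\Q)=\dim_{\Q}(A_{\mathcal O}(F)\otimes\Q)^{\Gal(F/\Q)}=\rank A_{\mathcal O}(F)/\phi(p^m)$. Meanwhile the CM structure endows $A_{\mathcal O}[p]$ with a $G_F$-stable filtration — the one induced by powers of the maximal ideal of $\Z[\zeta_{p^m}]/p$ — on whose graded pieces $G_F$ acts through powers of the mod-$p$ cyclotomic character, all trivial on $G_F\subseteq G_{\Q(\zeta_p)}$; so $A_{\mathcal O}[p]$ is a successive extension of trivial $G_F$-modules. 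Since moreover $A_{\mathcal O}$ has good reduction away from $\{v\mid p\infty\}$ over $F$, the argument proving Theorem~\ref{th:main} gives $\rank A_{\mathcal O}(F)\le\dim_{\Fp}\mathrm{Sel}_p(A_{\mathcal O}/F)\le 2\dim A_{\mathcal O}\cdot\dim_{\Fp}H^1(G_T(F),\Fp)$. By the global Euler characteristic formula over the totally complex field $F=\Q(\zeta_{p^m})$, $\dim_{\Fp}H^1(G_T(F),\Fp)=\phi(p^m)/2+\delta_m$ with $\delta_m=1+\dim_{\Fp}H^2(G_T(F),\Fp)$; since $\dim A_{\mathcal O}=\phi(p^m)/2$, this gives $\rank A_{\mathcal O}(\Q)\le\dim A_{\mathcal O}+\delta_m$.

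\textbf{Summation and the main obstacle.} Summing, $\rank J_n(\Q)\le\dim J_n+\sum_{m=1}^{n}(p^{m-1}(p+1)-3)\,\delta_m$. If $p$ is regular then $\dim_{\Fp}H^2(G_T(\Q(\zeta_{p^m})),\Fp)=0$ for all $m$ — the class group of $\Q(\zeta_{p^m})$ has no $p$-part, by Iwasawa's theorem — so $\delta_m=1$, the error is at most $(p+1)(p^n-1)/(p-1)$, and Theorem~\ref{th:fermatrank} holds over $\Q$ with $C'=(p+1)/(p-1)$. \emph{The main difficulty is the irregular case}: then $\dim_{\Fp}H^2(G_T(\Q(\zeta_{p^m})),\Fp)$ equals, up to $O(1)$, the $p$-rank of the class group of $\Q(\zeta_{p^m})$, which grows linearly in $m$, so the estimate above only yields $\dim J_n+O(np^n)$. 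To remove the spurious factor $n$ one must not bound $\rank A_{\mathcal O}(F)$ via the full group $H^1(G_T(F),\Fp)$, but instead control the everywhere-locally-trivial classes inside $H^1(G_T(F),A_{\mathcal O}[p])$ directly, using that $A_{\mathcal O}[p]$ is a \emph{cyclic} $\Z[\zeta_{p^m}]/p$-module on which $G_F$ acts through a character whose ramification at $p$ is bounded independently of $m$ (a consequence of the CM structure); this makes its Galois cohomology far smaller than the crude filtration estimate by graded pieces suggests, and — combined with a control theorem in the cyclotomic direction and the finiteness of the cuspidal subgroup of the Fermat Jacobian (to absorb the residual torsion) — should bring the total error down to $O(p^n)$. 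Establishing that these locally-trivial contributions are $O(1)$ per factor, uniformly in $n$ and $\mathcal O$, is where the real work lies.
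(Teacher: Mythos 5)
Your reduction to the cyclotomic fields $\Q(\zeta_{p^m})$ --- via the CM decomposition $J_n\sim\prod_{\mathcal O}A_{\mathcal O}$ and the crossed-product descent $\rank A_{\mathcal O}(\Q)=\rank A_{\mathcal O}(F)/\phi(p^m)$ --- is a genuinely different route from the paper's, which splits $J_n$ only into its ``new parts'' $J'_m$ and computes $H$-invariants of the induced representations of $(\Z/p^n\Z)^2\rtimes\Gal(F_n/K)$; your finer decomposition buys a cleaner per-factor statement, and up to the last step the estimate $\rank A_{\mathcal O}(\Q)\le\dim A_{\mathcal O}+\delta_m$ with $\delta_m=1+\dim_{\Fp}H^2(G_T(\Q(\zeta_{p^m})),\Fp)$ is correct. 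But as written the proof is incomplete: the statement is asserted for every odd prime $p$, and you only close the argument for regular $p$, explicitly deferring the irregular case to an unexecuted program (local conditions in the Selmer group, a control theorem, the cuspidal subgroup). That is a genuine gap, not a routine verification left to the reader.

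Moreover, the diagnosis on which you base that program rests on a false premise, and correcting it is exactly what finishes your own argument. You assert that for irregular $p$ the $p$-rank of the class group of $\Q(\zeta_{p^m})$ grows linearly in $m$; it does not. By Ferrero--Washington the $\mu$-invariant of $\Q(\zeta_{p^\infty})/\Q(\zeta_p)$ vanishes, so $X=\varprojlim A_m$ is a finitely generated $\Zp$-module of rank $\lambda$, and each $A_m$, being a quotient of $X$, has $p$-rank bounded by a constant depending only on $p$: it is the \emph{order} $|A_m|=p^{\lambda m+\nu}$, not the rank, that grows with $m$. Since $T$ contains a single finite prime (the unique prime above $p$) and all archimedean places of $\Q(\zeta_{p^m})$ are complex, $\dim_{\Fp}H^2(G_T(\Q(\zeta_{p^m})),\Fp)$ differs from this $p$-rank by an absolute constant; hence $\delta_m=O(1)$, your error term is $\sum_m O(p^m)\delta_m=O(p^n)$, and the theorem follows for all odd $p$ with no new Selmer-theoretic input. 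This is precisely the point at which the paper invokes Iwasawa theory: it shows $H^1(G_T(F_\infty),\Qp/\Zp)$ is pseudo-isomorphic to $M^{r_2(F)}\oplus T$ with $T[p]$ finite because $\mu=0$, which is the same boundedness of the class-group defect along the cyclotomic tower, packaged at the infinite level. You should replace the speculative final paragraph with this one observation.
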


We note that $\dim J_n$ is on order $p^{2n}$.

\subsection{Motivation and history}

The original motivation for this paper comes from the theorem of Chabauty, and its improvement by Coleman~\cite{cole:chabauty}, from which one obtains an upper bound for the number of points on $X(F)$ under the condition that the (generalized) Jacobian $J$ of $X$ satisfies the bound
\begin{equation}
\rank J(F) < \dim J.
\label{eq:chabauty}
\end{equation}
More recent work of Bruin, Flynn, Wetherell, and others has made it clear that Chabauty's method can be applied even when the inequality \eqref{eq:chabauty} fails for $X$, if it can be shown that is is satisfied for some set of \'{e}tale covers of $X$.  This leads naturally to the question
\begin{quote}
{\bf Question A:} Does every curve $X/F$ admit an \'{e}tale cover $Y/F$ such that $\rank J(Y)(F) < \dim J(Y)$?
\end{quote}

Any version of our main theorem giving a bound
\beq
\rank A(F) \leq C \dim A + o(\dim A)
\eeq
with $C < 1$ would imply an affirmative answer to Question A.  Unfortunately, the constant we achieve is always at least $1$; in the most favorable case, that of Fermat curves, we obtain a constant of $1$, exactly at the Chabauty boundary.

The idea of this paper might be seen as an attempt at a ``non-abelian Chabauty" method.  For simplicity, we explain this in the case where $X = \P^1-\set{0,1,\infty}$ and $\set{X_n}$ is the tower of Fermat curves.  Instead of studing the Selmer groups of the $J_n$ individually, one might try to work ``at the top" by studying a direct limit in $m,n$ of $H^1(G_T(\Q(\zeta_{p^m})), J_n[p^\infty])$.   This limit will be a cofinitely generated module for the non-abelian Iwasawa algebra $\Lambda = \Z_p[[\Gamma]]$, where $\Gamma = H_1(X_{\bar{F}},\Z_p) \rtimes \Gal(\Q(\zeta_{p^\infty})/\Q)$.  One might hope that a ``Selmer module" of this kind could be shown to be small in an appropriate sense, leading to an affirmative answer to Question A.  The difficulty, of course, is that we have not placed any conditions on the restriction of our cohomology classes to decomposition groups at $p$.  It is not clear to us exactly what conditions might be appropriate.

None of the Iwasawa-theoretic machinery is used in the present paper, but the argument should nonetheless be thought of as a finite-level approximation to the approach sketched above.

The main text of the present paper was written in 2002, at which time the author gave several lectures about the material contained here; we apologize for the long delay in making it publicly available.  The intervening period saw the appearance of the beautiful work of Minhyong Kim~\cite{kim:selmer}, which can also be seen as a kind of non-abelian Chabauty -- in his work, the quotient of the geometric etale fundamental group $\pi$ by the $m$th term of its lower central series takes on the role played by $\pi^{ab} = H_1(X_{\bar{F}},\Z_p)$ in the classical method, and by $\pi/[[\pi,\pi],[\pi,\pi]]$ in the above paragraph.  Under widely believed conjectures on Galois representations (and unconditionally in the case $X = \P^1-\set{0,1,\infty}$) Kim can show that the ``unipotent analogue" of \eqref{eq:chabauty} is satisfied, and the finiteness of $X(F)$ follows.

The impetus for releasing the paper now is the recent work of Guillermo Mantilla-Soler~\cite{mant:thesis}, which uses Theorem~\ref{th:main} as an ingredient in an upper bound for the Mordell-Weil rank of the modular Jacobian $J(Np^m)$, as $m$ grows with $N$ fixed.  The work of Mantilla-Soler combines the methods of the present paper with substantially more difficult group theory arising from the non-abelianness of the covers $X(Np^m)/X(N)$.  It would be very interesting to revisit his work from the Iwasawa-theoretic viewpoint described above.

\subsection{Acknowledgments}
We are very grateful to Nigel Boston, John Coates, Guillermo Mantilla-Soler, William McCallum, and Pavlos Tzermias for useful discussions about the material in this paper.  The author was partially supported by NSF-CAREER Grant DMS-0448750 and a Sloan Research Fellowship.

\section{Unipotent Galois actions on fundamental groups}

Let $X/F$ be defined as in the previous section, and write $\pi$ for the geometric etale fundamental group $\pi_1^{et}(X/\bar{F})$.  Let $T$ be a finite set of primes including $p,\infty$, and suppose 
\beq
\ldots \ra X_3/F \ra X_2/F \ra X_1/F \ra X_0/F = X
\eeq
is a pro-$p$ tower with good reduction away from $T$.

Write $\Theta^n$ for the Galois group of $(X_n)_{\bar{F}} / X_{\bar{F}}$, and $\Theta$ for the inverse limit of the $\Theta^n$.  Then $\Theta$ is a pro-$p$ group admitting a surjection
\beq
\phi: \pi \ra \Theta
\eeq
and an action of $G_F$, arising from the compatible actions of $G_F$ on the $\Theta^n$.

Furthermore, $\Theta$ admits a filtration by finite-index normal subgroups
\beq
\Theta = \Theta_0 \supset \Theta_1 \supset \Theta_2 \supset \ldots
\eeq
where $\Theta_n$ is the kernel of the projection from $\Theta$ to $\Theta^n$.  Write $H_n$ for $\phi^{-1}(\Theta_n)$.  Then $H_n = \pi_1^{et}(X_n / \bar{F})$.

Let $V_n$ be the group $H_n^{ab}/pH_n^{ab}$.  Then $V_n$ is a
finite-dimensional vector space over $\Fp$.  Note that the action of $G_F$ on $V_n$ factors through $G_T(F)$, by the good reduction hypothesis.

\begin{prop} Let $V$ be a subquotient of $V_n$.  Then 
\beq
\dim_{\Fp} H^1(G_T(F),V) \leq (\dim_{\Fp} V)(\dim_{\Fp}
H^1(G_T(F),\F_p)). 
\eeq
\label{pr:h1bound}
\end{prop}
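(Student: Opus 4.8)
The plan is to reduce the statement to the case where $V$ is a trivial $G_T(F)$-module, because the hypothesis (inherited from the setup, via the triviality of the $G_F$-action on $H_1(X_{\bar F},\Z/p\Z)$) tells us that $G_T(F)$ acts through a pro-$p$ quotient on each $V_n$, hence on each subquotient $V$. Let $N$ be the kernel of the action on $V$, so $Q = G_T(F)/N$ is a finite $p$-group. First I would set up the inflation-restriction exact sequence
\beq
0 \ra H^1(Q, V) \ra H^1(G_T(F),V) \ra H^1(N,V)^{Q} = \Hom(N,V)^{Q},
\eeq
where the last identification uses that $N$ acts trivially on $V$. Since $V$ is a $p$-group with trivial $Q$-action, $H^1(Q,V)$ is built from copies of $H^1(Q,\F_p)$ by dévissage along a composition series of $V$, so $\dim H^1(Q,V) \le (\dim V)\dim H^1(Q,\F_p) \le (\dim V)\dim H^1(G_T(F),\F_p)$, the last step by inflation $H^1(Q,\F_p)\inj H^1(G_T(F),\F_p)$.

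The main work, and the step I expect to be the principal obstacle, is bounding the restriction term $\Hom(N,V)^{Q}$ — that is, controlling $\Hom_{G_T(F)}(N^{ab}/p, \F_p)\tensor V$ in terms of $\dim V$ and $\dim H^1(G_T(F),\F_p)$. The key idea is that a continuous homomorphism $N \ra V$ that is $Q$-equivariant (with $Q$ acting trivially on $V$) must kill the closure of $[G_T(F),N]$, so it factors through the maximal quotient of $N$ that is central in $G_T(F)/[\text{...}]$; more usefully, one restricts attention to the subspace of $\Hom(N/p,\F_p)$ on which $G_T(F)$ acts trivially, and this is exactly the image of $H^1(G_T(F),\F_p) \ra H^1(G_T(F)^{[p]},\F_p)$-type restriction maps. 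Concretely: an element of $\Hom(N,V)^Q$ gives, for each of the $\dim V$ coordinate functionals, a class in $\bigl(\Hom(N,\F_p)\bigr)^{Q}$; and the five-term exact sequence for $1 \ra N \ra G_T(F) \ra Q \ra 1$ together with $H^2(Q,\F_p)$ being nonzero in general forces one to instead package things so that $\Hom(N,\F_p)^Q$ injects into a cohomology group of $G_T(F)$ itself.

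The cleanest route, which I would adopt, avoids fighting the restriction term directly: induct on $[G_T(F):N]$, or rather on the derived/lower-central length of $Q$. If $Q$ is trivial there is nothing to prove. Otherwise pick a central subgroup $Z \cong \F_p$ of $Q$, let $G' $ be its preimage in $G_T(F)$, so $[G_T(F):G'] = p$ and $G'$ still acts on $V$ through the smaller $p$-group $Q/Z$. By induction $\dim H^1(G', V) \le (\dim V)\dim H^1(G',\F_p)$. Now apply inflation-restriction for $G' \triangleleft G_T(F)$ with quotient $\Z/p$:
\beq
0 \ra H^1(\Z/p, V^{G'}) \ra H^1(G_T(F),V) \ra H^1(G',V)^{\Z/p} \ra H^2(\Z/p, V^{G'}).
\eeq
The outer terms involve only $H^i(\Z/p,-)$ applied to the $\F_p$-vector space $V^{G'}$, which contributes at most $\dim V^{G'} \le \dim V$ to $H^1(\Z/p,V^{G'})$; and the middle inequality $\dim H^1(G',V)^{\Z/p}\le \dim H^1(G',V)$ feeds in the inductive bound. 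Finally one needs the companion estimate relating $\dim H^1(G',\F_p)$ back to $\dim H^1(G_T(F),\F_p)$ — but this is just the $V = \F_p$ case of the very inequality being proven, so the induction is self-contained provided the bookkeeping of the $\Z/p$-cohomology correction terms is done uniformly. The delicate point is checking that these correction terms, summed over the $\log_p [G_T(F):N]$ steps of the induction, do not accumulate beyond the allotted $(\dim V)\dim H^1(G_T(F),\F_p)$; I expect this works because at each stage the correction is absorbed into the factor $\dim H^1(G_T(F),\F_p)$, which already counts the $\F_p$-dimension of the relevant $\Hom$-space, rather than adding to it.
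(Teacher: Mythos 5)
There is a genuine gap, and it sits exactly where you predicted trouble: the restriction term. Your reduction via inflation--restriction to the kernel $N$ of the action trades the problem of bounding $H^1(G_T(F),V)$ for the problem of bounding $\Hom(N,V)^Q$, equivalently (in your inductive reformulation) $\dim H^1(G',\F_p)$ for proper open subgroups $G'$ of $G_T(F)$. But these quantities are \emph{not} controlled by $\dim H^1(G_T(F),\F_p)$: if $G' = G_T(F')$ for a degree-$p$ extension $F'/F$ unramified outside $T$, the global Euler characteristic formula forces $\dim H^1(G_T(F'),\F_p)$ to grow roughly linearly in $[F':\Q]$, so it is typically strictly \emph{larger} than $\dim H^1(G_T(F),\F_p)$, and after $\log_p[G_T(F):N]$ steps the accumulated factor is far too big. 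Your claim that the needed comparison ``is just the $V=\F_p$ case of the very inequality being proven'' is the precise point of failure: the proposition compares cohomology of the \emph{fixed} group $G_T(F)$ with varying coefficients, whereas your induction requires comparing $H^1(\cdot\,,\F_p)$ across \emph{varying} groups, which is a different (and false) statement. (A smaller slip: $Q$ acts faithfully on $V$ by construction, not trivially; what you mean is that the $Q$-action is unipotent.)

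The fix is to delete the passage to $N$ entirely and run the d\'{e}vissage you already used for $H^1(Q,V)$ directly on $G_T(F)$. Since the image of $G_T(F)$ in $\GL(V_n)$ is a (pro-)$p$ group --- this is where the triviality of the action on $H_1(X_{\bar{F}},\Z/p\Z)$ enters, via the fact that automorphisms of $\pi$ trivial on $\pi^{ab}/p\pi^{ab}$ form a pro-$p$ group --- the action on $V_n$, hence on any subquotient $V$, is unipotent, so $V$ admits a filtration by $G_T(F)$-submodules whose graded pieces are trivial modules $(\Z/p\Z)^{r_i}$. The exact sequence
\beq
H^1(G_T(F),V_{i+1}) \ra H^1(G_T(F),V_{i}) \ra H^1(G_T(F),(\Z/p\Z)^{r_i})
\eeq
then gives $\dim H^1(G_T(F),V_{i}) \leq \dim H^1(G_T(F),V_{i+1}) + r_i \dim H^1(G_T(F),\F_p)$, and summing over the filtration yields the proposition. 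This is the paper's argument; no subgroup of $G_T(F)$ ever appears.
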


\begin{proof}  We begin with a group-theoretic lemma.

\begin{lem} The action of $G_F$ on $V_n$ is unipotent.
\label{le:unipotent}
\end{lem}

\begin{proof}  The image of $G_F$ on $\pi$ lies in the kernel of
\beq
\Aut(\pi) \ra \Aut(\pi^{ab} / p \pi^{ab})
\eeq
which is a pro-$p$ group.  (See, e.g., \cite[Th 12.2.2]{hall:groups}.)   So the same is true for the image of $G_F$ in $GL(V_n)$.  But an action of a $p$-group on a finite-dimensional vector space over $\F_p$ is automatically unipotent.
\end{proof}

By Lemma~\ref{le:unipotent}, we have a filtration of $G_F$-modules (whence also $G_T(F)$-modules)
\beq
V_n = V_{n;0} \supset V_{n;1} \supset \ldots V_{n;N} = 0.
\eeq
whose successive terms yield exact sequences 
\beq
0 \ra V_{n;i+1} \ra V_{n;i} \ra (\Z/p\Z)^{r_i} \ra 0.
\eeq
We thus obtain a cohomology sequence
\beq
H^1(G_T(F),V_{n;i+1}) \ra H^1(G_T(F),V_{n;i})
\ra H^1(G_T(F),(\Z/p\Z)^{r_i}).
\eeq
It follows that
\beq
\dim_{\Fp} H^1(G_T(F),V_{n;i}) \leq 
\dim_{\Fp} H^1(G_T(F),V_{n;i+1}) + r_i \dim_{\Fp} H^1(G_T(F),\Z/p\Z)
\eeq
and, by induction,
\beq
\dim_{\Fp} H^1(G_T(F),V_{n;i}) \leq (\sum_{j \geq i} r_j)\dim_{\Fp}
H^1(G_T(F),\Z/p\Z).
\eeq
The proposition follows by setting $i=0$.
\end{proof}

\section{Towers of curves}

We now explain how Proposition~\ref{pr:h1bound} can be used to give
upper bounds for Mordell-Weil ranks (more precisely, Selmer ranks) in towers of curves.  Let
\beq
\ldots \ra X_3/F \ra X_2/F \ra X_1/F \ra X_0/F = X
\eeq
be a pro-$p$ tower with good reduction away from $T$, and define $\pi,\Theta_n,H_n,V_n$ as in the previous section.

\begin{thm}  Let $\tilde{X}_n/F$ be a smooth proper curve over $F$ containing $X_n$ as open subscheme, let $J_n$ be the Jacobian of $\tilde{X}_n$, and let $A$ be a subquotient of $J_n$.  Suppose that $G_F$ acts trivially on $H_1(X,\F_p)$.  Then
\beq
\frac{\rank A(F)}{\dim A} \leq 2\dim_{\Fp} H^1(G_T(F),\F_p).
\eeq
\label{th:main}
\end{thm}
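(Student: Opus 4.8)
The plan is to deduce the theorem from Proposition~\ref{pr:h1bound} by a descent argument; the only real work is to realize $A[p]$ as a subquotient of $V_n$ as a $G_T(F)$-module. The Kummer sequence for $A/F$ gives an injection $A(F)/pA(F)\inj H^1(G_F,A[p])$, and since $A$ has good reduction away from $T$ the image consists of classes unramified outside $T$, so the map factors through $H^1(G_T(F),A[p])$. For the good-reduction assertion it is cleanest to invoke the refined hypothesis from the footnote in Section~\ref{s:intro} concerning the definition of good reduction, under which $\XX_n$ sits in a smooth proper $R$-curve with smooth complementary divisor: then for each $v\notin T$, on choosing a prime $\ell$ different from the residue characteristic of $v$, the $G_{F_v}$-module $H^1_{et}(X_{n,\bar F},\F_\ell)$ is unramified, and it contains $H^1_{et}(\tilde X_{n,\bar F},\F_\ell)$, which is a Tate twist of $J_n[\ell]$; hence $J_n[\ell]$ is unramified at $v$, and by N\'eron--Ogg--Shafarevich $J_n$, and therefore its subquotient $A$, has good reduction at $v$. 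Since $\rank_\Z A(F)\le\dim_{\Fp}A(F)/pA(F)$ we conclude
\beq
\rank A(F)\le\dim_{\Fp}H^1(G_T(F),A[p])
\eeq
(the same reasoning bounds $\dim_{\Fp}\mathrm{Sel}_p(A/F)$, the Selmer-rank refinement alluded to earlier).

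The heart of the argument is to show that $A[p]$ is a subquotient of $V_n$ as a $G_T(F)$-module. Since $\rank A(F)$ and $\dim A$ are isogeny invariants, we may assume $A=B/C$ with $C\subseteq B\subseteq J_n$ abelian subvarieties over $F$. As multiplication by $p$ is surjective on $\bar F$-points of $C$, the snake lemma applied to $0\to C\to B\to A\to 0$ gives $A[p]=B[p]/C[p]$ with $C[p]\subseteq B[p]\subseteq J_n[p]$, so $A[p]$ is a $G_F$-subquotient of $J_n[p]$. Next, the maximal exponent-$p$ abelian cover of $\tilde X_n$ is the pullback of $[p]\colon J_n\to J_n$ along an Albanese morphism, a construction defined over $F$; this identifies $\pi_1^{et}(\tilde X_{n,\bar F})^{ab}/p$ with $J_n[p]$ as $G_F$-modules, with no Tate twist. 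Filling in the finitely many punctures gives a surjection of $G_F$-groups $\pi_1^{et}(X_{n,\bar F})\twoheadrightarrow\pi_1^{et}(\tilde X_{n,\bar F})$, whence a $G_F$-equivariant surjection $V_n=\pi_1^{et}(X_{n,\bar F})^{ab}/p\twoheadrightarrow J_n[p]$. Since the $G_F$-action on $V_n$ factors through $G_T(F)$, it follows that $A[p]$ is a subquotient of $V_n$ as a $G_T(F)$-module.

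Finally, applying Proposition~\ref{pr:h1bound} to $V=A[p]$ — legitimate because, by the standing hypothesis that $G_F$ acts trivially on $H_1(X,\F_p)$, its action on $V_n$ is unipotent (Lemma~\ref{le:unipotent}) — yields
\beq
\dim_{\Fp}H^1(G_T(F),A[p])\le(\dim_{\Fp}A[p])\bigl(\dim_{\Fp}H^1(G_T(F),\Fp)\bigr)=2(\dim A)\bigl(\dim_{\Fp}H^1(G_T(F),\Fp)\bigr),
\eeq
using $\dim_{\Fp}A[p]=2\dim A$; combining with the displayed inequality of the first paragraph and dividing by $\dim A$ proves the theorem. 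The step I expect to demand the most care is the second paragraph: one must verify that $\pi_1^{et}(\tilde X_{n,\bar F})^{ab}/p\cong J_n[p]$ is an isomorphism of Galois modules \emph{without} a cyclotomic twist --- this is precisely what keeps the final bound phrased in terms of $H^1(G_T(F),\Fp)$ rather than $H^1(G_T(F),\mu_p)$ --- and, as a subsidiary matter, that good reduction of the open tower away from $T$ genuinely forces good reduction of $J_n$, the technical point the footnote in Section~\ref{s:intro} is designed to circumvent.
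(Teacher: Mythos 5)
Your proof is correct and follows essentially the same route as the paper: descend to $\dim_{\Fp} H^1(G_T(F),A[p])$, realize $A[p]$ as a $G_T(F)$-subquotient of $V_n$ via the maximal exponent-$p$ abelian cover (the paper phrases this as ``$J_n[p]$ parametrizes \'etale abelian covers of $\tilde{X}_n$, which restrict to covers of $X_n$''), and apply Proposition~\ref{pr:h1bound} together with $\dim_{\Fp}A[p]=2\dim A$. You supply several details the paper leaves to ``the usual descent'' --- the unramifiedness of the Kummer classes outside $T$, the absence of a cyclotomic twist in $\pi_1^{et}(\tilde{X}_{n,\bar F})^{ab}/p\cong J_n[p]$, and the snake-lemma identification $A[p]=B[p]/C[p]$ --- all of which are handled correctly.
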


\begin{proof}  The usual descent on abelian varieties shows that
\beq
\rank A(F) \leq \dim_{\Fp} A(F)/pA(F) \leq \dim_{\Fp}
H^1(G_T(F), A[p]).
\eeq

The $G_T(F)$-module $J_n[p]$ parametrizes \'{e}tale
abelian covers of $\tilde{X}_n$ with exponent $p$; these restrict
to \'{e}tale abelian covers of $X_n$ with exponent $p$.  It follows that $J_n[p]$ is a
quotient of $V_n$, whence $A[p]$ is a subquotient of $V_n$.  The result now follows immediately from Proposition~\ref{pr:h1bound}.

\end{proof}

We give some examples where Theorem~\ref{th:main} applies.

\begin{exmp}
Let $X/F$ be a proper curve of genus $g \geq 2$, admitting a smooth model over $\Spec \OO_F[1/T]$, and suppose $P \in X(F)$ is a rational point.  Define $X_n$ to be the Cartesian product
\beq
\begin{CD}
X_n @>>> J(X) \\
@VVV @VV[p^n]V \\
X @>>> J(X)
\end{CD}
\eeq
where the morphism $X \ra J(X)$ is the Abel-Jacobi map sending $Q$ to
$[Q-P]$, and $[p^n]$ is multiplication by $p^n$.  Then the $X_n$ form a pro-$p$ tower with good reduction away from $T$.  In this case, $\Theta = \pi^{ab} \cong \Zp^{2g}$.  The condition that $X$ is proper can be removed; see example~\ref{ex:fermat} below for the simplest case.
\label{ex:closedcurve}
\end{exmp}

\begin{exmp}
Let $X = Y(p)$ be the moduli space of elliptic curves with full level $p$ structure, and let $X_n = Y(p^{n+1})$.  Then the $X_n$ form a pro-$p$ tower with good reduction away from $T=p\infty$.  Here, $\Theta = \Gamma(p)/ \pm 1$, where $\Gamma(p)$ is the level $p$ congruence subgroup of $\SL_2(\Z_p)$.  This case is treated in detail by Mantilla-Soler in \cite{mant:thesis}. 
\end{exmp}

\begin{exmp}
\label{ex:fermat}
Let $X$ be $\P^1 - \set{0,1,\infty}$, which we write as the complement of the coordinate axes in the projective curve $x + y + z = 0$.  Let $X_n$ be the complement of the coordinate axes in the Fermat curve
\beq
x^{p^n} + y^{p^n} + z^{p^n} = 0.
\label{eq:fermat}
\eeq
Then the natural projection $X_n  \ra X$ is geometrically Galois with group $(\Z/p^n\Z)^2$, and the $X_n$ form a pro-$p$ tower with good reduction away from $T = p\infty$.  In this case, $\Theta = \Z_p^2$.  Theorem~\ref{th:main} thus tells us that
\beq
\rank J_n(\Q(\zeta_p)) \leq 2 \dim_{\F_p}H^1(G_T(\Q(\zeta_p)),\F_p) \dim J_n.
\eeq
The constant on the right-hand side depends critically on the arithmetic of $p$; in particular, it is large when the cyclotomic extension $\Q(\zeta_p)$ has a large $p$-class group.  In the following section we will explain how to improve the above bound to one with milder dependence on $p$.
\end{exmp}

\section{Towers of Fermat curves}

In this section we prove the following theorem.

\begin{thm} Let $p$ be an odd prime, and let $K$ be a number field such that the cyclotomic
extension $K(\zeta_{p^\infty})/K(\zeta_p)$ has $\mu$-invariant $0$. Let $J_n$ be the Jacobian of the Fermat curve $x^{p^n} + y^{p^n} + z^{p^n} = 0$.  Then there exists a constant
$C'$, depending on $K$ and $p$, such that
\beq
\rank_\Z  J_n(K) \leq [K:\Q]\dim J_n + C' p^n.
\eeq
\label{th:fermatrank}
\end{thm}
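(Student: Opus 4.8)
The plan is to follow the strategy of Theorem~\ref{th:main}, but to replace the crude bound on $\dim_{\F_p} H^1(G_T(F), \F_p)$ — which depends on the $p$-class group of $F$ and hence spoils the constant — by a finer analysis that exploits the very specific structure of the Fermat tower. The point is that for the Fermat curves, $\Theta = \Z_p^2$, so $V_n$ is not just any unipotent $G_F$-module: it is built out of the cohomology of $X = \P^1 - \{0,1,\infty\}$ with coefficients in symmetric powers of the standard two-dimensional representation, and the Galois action is governed by the cyclotomic character together with a ``geometric'' piece. Concretely, $J_n[p]$ is a quotient of $V_n = H_n^{\mathrm{ab}}/pH_n^{\mathrm{ab}}$, and one wants to understand $H^1(G_T(K), J_n[p])$ — equivalently, to bound the Selmer group — much more carefully than in the general case.

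\textbf{Step 1: Reduce to counting.} As in Theorem~\ref{th:main}, descent gives
\beq
\rank_\Z J_n(K) \leq \dim_{\F_p} H^1(G_T(K), J_n[p]),
\eeq
so it suffices to bound the right-hand side by $[K:\Q]\dim J_n + C'p^n$. Next I would pass to the cyclotomic extension: replacing $K$ by $K(\zeta_{p^m})$ changes the cohomology in a controlled way (inflation–restriction, with the $\Gal(K(\zeta_{p^m})/K)$-invariants and a bounded error term), so it is enough to work over $K_\infty = K(\zeta_{p^\infty})$, or rather at finite level $K(\zeta_{p^m})$ with $m$ comparable to $n$, and keep track of the $\mathbb{Z}_p[[\Gal(K_\infty/K(\zeta_p))]]$-module structure. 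This is where the hypothesis that the cyclotomic $\mu$-invariant is $0$ enters: it forces the relevant Iwasawa module to be finitely generated over $\Z_p$, so its $\F_p$-corank is bounded independently of the level, contributing only to $C'$.

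\textbf{Step 2: Decompose $J_n[p]$ and split off the ``large'' part.} Over $\bar F$ the Galois group $\Theta^n = (\Z/p^n)^2$ acts on $J_n$, and $J_n[p]$ decomposes under the group algebra $\F_p[(\Z/p^n)^2]$ into isotypic (generalized eigenspace) pieces indexed by characters $\chi\colon (\Z/p^n)^2 \to \bar\F_p^\times$, i.e. by pairs $(a/p^n, b/p^n)$. The piece for the trivial character contributes $J_0[p] = H_1(X,\F_p)$, which by hypothesis is a trivial $G_F$-module and on which $H^1(G_T(K), -)$ has dimension $[K:\Q]\,\dim H_1(X,\F_p) + O(1)$ by the global Euler characteristic formula — this is the source of the main term $[K:\Q]\dim J_n$ once summed over all the ``new at level $n$'' characters. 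Here one must be careful: $\dim J_n = \frac{(p^n-1)(p^n-2)}{2}$, and the identification of $\dim_{\F_p} H^1(G_T(K), J_n[p])$ with $[K:\Q]\dim J_n$ plus lower-order terms is exactly the statement that, on each nontrivial isotypic piece, the local-at-$p$ term in the Euler characteristic contributes the full dimension while the global obstruction (the $H^2$, controlled by Iwasawa theory and $\mu=0$) is negligible.

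\textbf{Step 3: Bound the error from the small characters and from $H^2$.} The $O(p^n)$ error should come from two sources: first, the characters $\chi$ that factor through $(\Z/p)^2$ (there are $p^2$ of them, but more relevantly the ones ramified only at a bounded level), together with boundary/ramification contributions along the coordinate axes, which involve Jacobi-sum Hecke characters à la Weil and can be counted to be $O(p^n)$ in total; and second, the contribution of $H^2(G_T(K_\infty), J_n[p])$, which by Poitou–Tate is dual to a Sha/Selmer term whose corank is bounded by the $\mu$-invariant of the relevant cyclotomic Iwasawa module — zero by hypothesis — plus the number of ramified places, giving again $O(p^n)$ (the primes above $p$ in $K(\zeta_{p^n})$ number $O(p^n)$). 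I expect \textbf{this step — pinning down that the only non-negligible part of the Selmer group is the ``$[K:\Q]\dim J_n$'' piece, and that everything else is genuinely $O(p^n)$ uniformly in $n$ — to be the main obstacle}, because it requires the Euler characteristic bookkeeping for $H^1$ versus $H^0, H^2$ over the tower of fields to be done simultaneously with the isotypic decomposition, and one has to verify that the $\mu = 0$ hypothesis really does kill the $H^2$ contribution at the expected rate rather than merely making it finite at each level. Assembling Steps 1--3 and choosing $C'$ to absorb all the bounded-per-place and bounded-per-small-character contributions then yields the stated inequality.
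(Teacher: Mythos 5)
Your proposal heads in a genuinely different direction from the paper, and as written it has two concrete gaps. The first is a factor of $2$ in the main term. The global Euler characteristic formula for the finite module $J_n[p]$ over $K$ bounds $h^1$ by $h^0+h^2$ plus the archimedean term $[K:\Q]\dim_{\F_p}J_n[p] = 2[K:\Q]\dim J_n$: there is no local-at-$p$ term in that formula, and making $H^2$ negligible cannot shrink the archimedean contribution below $2[K:\Q]\dim J_n$. So Steps 2--3, even if carried out perfectly, prove only $\rank J_n(K)\leq 2[K:\Q]\dim J_n + O(p^n)$, which is no better than Theorem~\ref{th:main} specialized to this setting. Recovering the factor $1/2$ without imposing local conditions at $p$ (which this paper deliberately avoids) requires passing to divisible coefficients over the cyclotomic tower, where the relevant Iwasawa module $H^1(G_T(F_\infty),\Qp/\Zp)$ has $\Lambda$-corank $r_2(F)=\tfrac12[F:\Q]$ rather than $[F:\Q]$; that, together with $\mu=0$, is how the paper gets $\dim_{\F_p}H^1(G_T(F_n),\F_p)\leq \tfrac12([F_n:\Q]+C)$ and hence $\rank A(F_n)\leq([F_n:\Q]+C)\dim A$ over $F_n=K(\zeta_{p^n})$.

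The second, more structural gap is that you have no mechanism for descending from the cyclotomic field to $K$ that saves a full factor of $[F_n:K]$ in the main term. Inflation--restriction only embeds $H^1(G_T(K),-)$ into the $\Gal(F_n/K)$-invariants of $H^1(G_T(F_n),-)$ up to bounded error, and the invariants of a space of dimension $\approx[F_n:\Q]\dim J_n$ could a priori still have that size. Relatedly, your isotypic decomposition of $J_n[p]$ under $\Theta^n=(\Z/p^n\Z)^2$ is only a decomposition of Galois modules after adjoining $\zeta_{p^n}$, so it does not directly organize $H^1(G_T(K),J_n[p])$. The paper's resolution is the step your outline is missing: view $W=J_n(F_n)\tensor_\Z\C$ as a representation of the semidirect product $G=\Theta^n\rtimes H$ with $H=\Gal(F_n/K)$, note that $\rank J_n(K)=\dim W^H$, and observe that every irreducible constituent of the new-at-level-$n$ part is induced from a character $\chi_i$ of $\Theta^n$ of exact order $p^n$, whose stabilizer in $H$ is trivial because $H$ acts through the cyclotomic character. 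Hence each such constituent has dimension $|H|$ and at most a one-dimensional $H$-fixed subspace, so $\dim (W')^H\leq [F_n:K]^{-1}\dim W'$; combined with the bound over $F_n$ this yields $([K:\Q]+C[F_n:K]^{-1})\dim J'_n$ for the new part, and summing over the tower gives the theorem. Your instinct to decompose under $\Theta^n$ and to use $\mu=0$ is in the right spirit, but without this interaction between the deck group and $\Gal(F_n/K)$ the main term $[K:\Q]\dim J_n$ is out of reach; the Jacobi-sum considerations in your Step 3 are not needed.
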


Let $K$ be a number field, and let $X/K, X_n/K$ be as in Example~\ref{ex:fermat}.  Let $F = K(\zeta_p)$, and let $T$ be the set of primes dividing
$p\infty$.   We are now in the situation of
Theorem~\ref{th:main}, which tells us that 
\beq
\rank J_n(F) \leq 2 \dim J_n \dim_{\Fp} H^1(G_T(F),\F_p).
\eeq

In other words, we have shown that the rank of $J_n$ over $F$ is bounded by a
constant multiple $c$ of its dimension, where $c$ depends on
$p$.  We now turn to the more delicate problem of showing that $c$ can
be chosen independently from $p$.

Let $F_n = F(\mu_{p^n})$.  Since Theorem~\ref{th:main} applies over
any field containing $\mu_p$, we also have
\beq
\rank J_n(F_n) \leq 2 \dim J_n \dim_{\Fp}
H^1(G_T(F_n)),\F_p).
\eeq

We have an inflation-restriction exact sequence
\beq
0 \ra H^1(\Gal(F_\infty/F_n),\F_p) \ra
H^1(G_T(F_n),\F_p) \ra H^1(G_T(F_\infty),\F_p)^{\Gal(F_\infty/F_n)} \ra
H^2(\Gal(F_\infty/F_n),\F_p)
\eeq
in which the last term is $0$, since $\Gal(F_\infty/F_n) \cong \Zp$ has
cohomological dimension $1$.  So
\begin{equation}
\dim_{\Fp} H^1(G_T(F_n)),\F_p) = 1 + \dim_{\Fp}
H^1(G_T(F_\infty),\F_p)^{G(F_\infty/F_n)}.
\label{eq:h1fnmodp}
\end{equation}
Furthermore, the long exact sequence
\beq
0 = H^0(G_T(F_\infty),\Qp/\Zp) / p   \ra H^1(G_T(F_\infty),\Z/p\Z) \ra H^1(G_T(F_\infty),\Qp/\Zp)[p] \ra 0
\eeq
implies that $H^1(G_T(F_\infty),\F_p) =
H^1(G_T(F_\infty),\Qp/\Zp)[p]$.

Now the group $H^1(G_T(F_\infty),\Qp/\Zp))$ is a well-understood
object of classical Iwasawa theory.  It is a cofinitely generated $\Zp[[G(F_\infty/F)]]$-module pseudo-isomorphic to 
\beq
M^{r_2(F)} \oplus T
\eeq
where $M$ is cofree and $T$ is cotorsion, and $r_2(F)$ denotes the number of complex places of $F$.  Suppose the cyclotomic extension $F_\infty/F$ has $\mu$-invariant $0$ (as is the case, for instance, whenever $F/\Q$ is abelian by the theorem of Ferrero and Washington.)  Then it follows from \cite[11.3.16,11.3.17]{neuk:cohomology} that $T[p]$ is a finite group.  We assume that the $\mu$-invariant of $F_\infty/F$ is $0$ from now on.

Since $F$ contains $\mu_p$, it is totally complex, and $r_2(F) =(1/2)[F:\Q]$.  It follows that $H^1(G_T(\F_\infty),\F_p)$ is pseudo-isomorphic to $M[p]^{(1/2)[F:\Q]}$.

Combining this with \eqref{eq:h1fnmodp} and the fact that
\beq
M[p]^{\Gal(F_\infty/F_n)} = \F_p[\Gal(F_n/F)]
\eeq
we have
\beq
\dim_{\Fp} H^1(G_T(F_n),\Z/p\Z)) \leq  (1/2)([F_n:\Q] + C)
\eeq
for some constant $C$ independent of $n$.

Applying Theorem~\ref{th:main}, we now have:

\begin{prop} Let $F \supset \Q(\zeta_p)$ be a number field whose
cyclotomic extension has $\mu$-invariant
$0$, and let $A/F$ be a subquotient of the Jacobian of the Fermat
curve $x^{p^n} + y^{p^n} + z^{p^n} = 0$, and let $F_n = F(\mu_{p^n})$.  Then
\beq
\rank A(F_n) \leq (\dim A)([F_n:\Q] + C)
\eeq
where $C$ is a constant independent of $n$.
\label{pr:fnrank}
\end{prop}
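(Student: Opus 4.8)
The plan is simply to collect the chain of estimates developed in the discussion preceding the statement and feed them into Theorem~\ref{th:main}. Since $F_n = F(\mu_{p^n})$ contains $\mu_p$, Theorem~\ref{th:main} applies over $F_n$, and because $A$ is a subquotient of $J_n$ it gives
\[
\rank A(F_n) \leq 2(\dim A)\,\dim_{\Fp} H^1(G_T(F_n),\F_p).
\]
Thus the proposition reduces to establishing a bound $\dim_{\Fp} H^1(G_T(F_n),\F_p) \leq \tfrac12\big([F_n:\Q]+C\big)$ for some $C$ independent of $n$; note that the factor $2$ coming from Theorem~\ref{th:main} will cancel the $\tfrac12$, leaving exactly $(\dim A)([F_n:\Q]+C)$ as desired.

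To establish that bound I would argue exactly as in the paragraphs above. First, inflation--restriction for $F_\infty/F_n$, combined with the fact that $\Gal(F_\infty/F_n)\cong\Zp$ has cohomological dimension $1$ (so the $H^2$ term vanishes), reduces the computation of $\dim_{\Fp} H^1(G_T(F_n),\F_p)$ to $1 + \dim_{\Fp} H^1(G_T(F_\infty),\F_p)^{\Gal(F_\infty/F_n)}$. Next, the coefficient sequence $0 \to \F_p \to \Qp/\Zp \xrightarrow{p} \Qp/\Zp \to 0$, together with the $p$-divisibility of $H^0(G_T(F_\infty),\Qp/\Zp)$, identifies $H^1(G_T(F_\infty),\F_p)$ with $H^1(G_T(F_\infty),\Qp/\Zp)[p]$. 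By classical Iwasawa theory this last module is pseudo-isomorphic to $M^{r_2(F)} \oplus T$ with $M$ cofree and $T$ cotorsion; the hypothesis that $F_\infty/F$ has $\mu$-invariant $0$ guarantees, via \cite[11.3.16, 11.3.17]{neuk:cohomology}, that $T[p]$ is finite. Since $F$ contains $\mu_p$ it is totally complex, so $r_2(F) = \tfrac12[F:\Q]$, and taking $\Gal(F_\infty/F_n)$-invariants of the cofree part yields $M[p]^{\Gal(F_\infty/F_n)} \cong \F_p[\Gal(F_n/F)]$, of $\F_p$-dimension $[F_n:F]$. Collecting these identifications, the defect in the pseudo-isomorphism, the finite contribution of $T[p]$, and the constant $1$ from the inflation term are all bounded independently of $n$, so $\dim_{\Fp} H^1(G_T(F_n),\F_p) \leq \tfrac12([F_n:\Q]+C)$.

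The real content of the argument lies entirely in the Iwasawa-theoretic input: the structure of $H^1(G_T(F_\infty),\Qp/\Zp)$ as a $\Zp[[\Gal(F_\infty/F)]]$-module and, \emph{crucially}, the fact that $\mu=0$ forces the cotorsion summand to contribute only finitely to the $p$-torsion, which is precisely what keeps $C$ uniform in $n$. Everything else is bookkeeping and the arithmetic cancellation of the factor $2$ against the $\tfrac12$ from $r_2(F)$. I would cite \cite[11.3.16, 11.3.17]{neuk:cohomology} for the structure statement exactly as in the discussion above; no new ideas beyond assembling what has already been proved are required.
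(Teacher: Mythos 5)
Your proposal is correct and follows essentially the same route as the paper: the paper's own proof of Proposition~\ref{pr:fnrank} is precisely the chain of reductions you describe (Theorem~\ref{th:main} over $F_n$, inflation--restriction down from $F_\infty$, the identification of $H^1(G_T(F_\infty),\F_p)$ with $H^1(G_T(F_\infty),\Qp/\Zp)[p]$, and the Iwasawa-theoretic structure result with $\mu=0$ controlling the cotorsion part), assembled in the paragraphs immediately preceding the statement. No discrepancies to report.
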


\medskip

Recall that we have denoted the Galois group of $X_n / \bar{K} \ra X / \bar{K}$ by $\Theta^n$, which is isomorphic to $(\Z/p^n\Z)^2$.  Write $H$ for $\Gal(F_n/K)$, and $G$ for the semidirect product $\Theta^n \rtimes H$ where $H$ is given its natural action on $\Theta^n \subset \Aut(X_n / \bar{K})$.  Then $J_n(F_n)$ carries an action of $G$, and the vector space $W = J_n(F_n) \tensor_\Z \C$ is a complex representation of $G$.  Moreover,
\beq
\rank J_n(F) = \dim W^H.
\eeq

Since $G$ is a semidirect product of an abelian group $\Theta^n$
by a subgroup of $\Aut(\Theta_n)$, its irreducible
representations are easy to describe (see \cite[8.2]{serr:lrfg}.) To be
precise:  let $\chi_i$ be a character of $\Theta^n$, and let $H_i
\subset H$ be the subgroup fixing $\chi_i$.  Then $\chi_i$
naturally extends to a character $\alpha_i$ of $G_i = \Theta^n H_i$.
Let $\psi$ be a character of $H_i$.  Then $\Ind_{G_i}^G (\alpha_i
\tensor \psi)$ is an irreducible representation of $G$, denoted
$\rho_{i,\psi}$.  The dimension of $\rho_{i,\psi}$ is $[H:H_i]$, and the $\rho_{i,\psi}$ comprise all irreducible representations of $G$.

Note that
\beq
\Res_H \rho_{i,\psi} = \Res_H \Ind^G_{G_i} (\alpha_i \tensor \psi)
= \Ind^H_{H_i} \psi
\eeq
by \cite[7.3]{serr:lrfg}.
So $\dim (\rho_{i,\psi})^H$ is $1$ if $\psi$ is trivial, and
$0$ otherwise.  In any case,
\begin{equation}
\dim (\rho_{i,\psi})^H \leq ([H:H_i]^{-1})\dim \rho_{i,\psi}.
\label{eq:hfixedbound}
\end{equation}

Let $B$ be the kernel of the map $\Theta^n \ra \Theta^{n-1}$.  Then we have an exact sequence of $\C[H]$-modules
\beq
0 \ra W^B \ra W \ra W' \ra 0.
\eeq
We note, first of all, that $W^B$ is precisely $J_{n-1}(F_n)
\tensor_\Z \C$.  The map $X_n \ra X_{n-1}$ induces a homomorphism
$J_{n-1} \ra J_n$, whose cokernel we call $J'_n$.  We also write $J'_0
= J$. Then $W'$ is isomorphic to $J'_n(F_n) \tensor_\Z \C$.

Now $W'$ is the direct sum of all the irreducible constituents of $W$ on which $B$ does not act trivially.  The $\rho_{i,\psi}$ on which $B$ does not act trivially are precisely those for which $\chi_i$ has exact order $p^n$.  Note that the stabilizer $H_i$ is trivial for any such $\chi_i$.  Applying \eqref{eq:hfixedbound} to each constituent of $W'$, we find
that
\beq
\dim (W')^H \leq |H|^{-1} \dim W' = [F_n:K]^{-1} \dim W'.
\eeq

Combining this with the upper bound on $\dim W'$ given by
Proposition~\ref{pr:fnrank}, we have
\beq
\dim (J'_n(K) \tensor_\Z \C) = \dim (W')^H 
\leq
([K:\Q] + C [F_n:K]^{-1})(\dim J'_n).
\eeq
Now $J_n$ is isogenous to $\oplus_{i=0}^n J'_i$.  It follows that
\beq
\rank J_n(K) \leq [K:\Q]\dim J_n + C \sum_{i=0}^n [F_i:K]^{-1} \dim
J'_i.
\eeq
The sum over $i$ on the right-hand side is bounded above by a constant multiple of $p^n$.   This completes the proof of Theorem~\ref{th:fermatrank}.

\medskip

\bibliographystyle{plain}
\bibliography{CurveRanks-ARXIV}

\end{document}